\newtheorem{theorem}{Theorem}[section]
\newtheorem{lemma}[theorem]{Lemma}
\newtheorem{proposition}[theorem]{Proposition}
\newtheorem{definition}[theorem]{Definition}
\theoremstyle{remark}
\newtheorem*{remark}{Remark}
\newtheorem*{remarks}{Remarks}
\DeclareMathOperator{\re}{Re }
\numberwithin{equation}{section}
\newtheorem*{claim}{Claim}
\newcommand{\T}{\mathbb{T}}
\newcommand{\D}{\mathbb{D}}
\newcommand{\N}{\mathbb{N}}
\DeclareMathOperator*{\dist}{dist}
\begin{document}
\title[Zero-free approximation and universal o.p.a.]{Simultaneous zero-free approximation \\ and universal optimal polynomial approximants}
\author[B\'en\'eteau]{Catherine B\'en\'eteau}
\address{Department of Mathematics, University of South Florida, 4202 E.~Fowler Avenue,
Tampa FL 33620-5700, USA.} \email{cbenetea@usf.edu}
\author[Ivrii]{Oleg Ivrii}
\address{Department of Mathematics, California Institute of Technology,
1200 E.~California Boulevard, Pasadena CA 91125, USA.} \email{ivrii@caltech.edu}
\author[Manolaki]{Myrto Manolaki}
\address{School of Mathematics and Statistics, University College Dublin, Belfield, Dublin 4, Ireland.} \email{arhimidis8@yahoo.gr}
\author[Seco]{Daniel Seco}
\address{Universidad Carlos III de Madrid and Instituto de Ciencias  
Matem\'aticas (ICMAT), Departamento de Matem\'aticas UC3M, Avenida de  
la Universidad 30,  28911 Legan\'es (Madrid),
Spain.}
\email{dsf$\underline{\,\,\,}$cm@yahoo.es}
\date{\today}

\begin{abstract}
Let $E$ be a closed subset of the unit circle of measure zero. Recently, Beise and M\"uller showed the existence of a function in the Hardy space $H^2$  for which the partial sums of its Taylor series approximate any continuous function on $E$.
In this paper, we establish an analogue of this result in a non-linear setting where we consider \emph{optimal polynomial approximants} of reciprocals of functions in $H^2$ instead of Taylor polynomials. The proof uses a new result on simultaneous zero-free approximation of independent interest. Our results extend to Dirichlet-type spaces $\mathcal{D}_\alpha$ for $\alpha \in [0,1]$.
\end{abstract}

\maketitle

\section{Introduction}\label{Intro}

Given a Hilbert space $\mathscr H$ of analytic functions on the unit disc $\mathbb{D}$ and a non-zero function $f \in \mathscr H $,
 a polynomial $Q_n$ is called an \emph{optimal polynomial approximant (o.p.a.)} of order $n$ of $1/f$ if $Q_n$ minimizes $\|qf - 1\|_{\mathscr H}$ over all polynomials $q$ of degree at most $n$. We note that this definition makes sense even if $f \in \mathscr H$ has zeros in $\mathbb{D}$. In fact, unless $f$ is identically $0$, the polynomial $Q_n$ is uniquely determined.

Optimal polynomial approximants were introduced in \cite{opa0} and are closely connected to classical objects in function theory such as  orthogonal polynomials and reproducing kernels, see \cite{opa2, opa3}.
They arise naturally in connection with the notion of cyclicity in Dirichlet-type
spaces $\mathcal D_{\alpha}$. Such spaces consist of all holomorphic functions $f : \mathbb{D}\to\mathbb{C}$ whose Taylor coefficients in the expansion
$$f(z) =\sum _{k=0}^{\infty} a_n z^n, \qquad z \in \mathbb{D},$$
satisfy
$$ \| f \|_{\alpha}^2:=\sum _{k=0}^{\infty} (k+1)^{\alpha}|a_k|^{2}<\infty.$$  Given also $g(z) = \sum_{k=0}^{\infty}b_kz^k$ in $\mathcal D_{\alpha}$, with the associated inner product
\begin{equation*}
     \langle f , g \rangle_\alpha = \sum_{k=0}^\infty (k+1)^{\alpha} a_k \overline{b_k},
\end{equation*}
the space $\mathcal D_{\alpha}$ becomes a reproducing kernel Hilbert space. Of particular interest are the cases of Bergman, Hardy and Dirichlet spaces, corresponding respectively
 to the parameters $\alpha=-1,0,1$. We refer the reader to \cite{duren-schuster, duren, EFKMR} for background on these spaces. In our paper, we focus on the case of the Hardy space $H^2$ and the Dirichlet space $\mathcal{D}$.

A function $f \in \mathcal D_{\alpha}$ is called \emph{cyclic} if the set of polynomial multiples of $f$ is dense in $\mathcal D_\alpha$.  By a result of Beurling, a function in $H^2$ is cyclic if and only if it is outer, see \cite{duren}. On the other hand, cyclic functions for the classical Dirichlet space $\mathcal{D}$ have not been fully understood: In \cite{brown-shields}, Brown and Shields provided necessary conditions and conjectured these to be sufficient.  Partial advances on this problem are described in \cite{EFKMR}. The problem in the Bergman space $A^2$ is also far from solved.
However, it is easy to provide a characterization of cyclicity using the notion of optimal polynomial approximants: a function $f \in \mathcal D_{\alpha}$  is cyclic if and only if the optimal polynomial approximants $Q_n$ of
$1/f$ satisfy $\| Q_nf -1\| _{\alpha} \to 0$ as $n\to \infty$. We note that the latter condition implies that $Q_n$ converges to $1/f$ locally uniformly on $\mathbb{D}$.  It is natural to ask whether this convergence still holds on the boundary.  In \cite{opa1}, it is observed that if a polynomial $f$ has only simple roots, all of which lie outside the unit disc, then $Q_n$ converges uniformly to $1/f$ on any closed subset  of the unit circle which does not intersect the roots of $f$.

In this paper, we are interested in the opposite phenomenon: are there functions with o.p.a.~that have more than one limit point at some $\zeta$ on the unit circle $\T$?  Can those limit points be the entire complex plane $\mathbb{C}$? This question relates to the concept of universality.  Roughly speaking, a function is called {\em universal}\/ if using a sequence of mappings, it can approximate every object in some class of interest.

 In \cite{nestoridis}, Nestoridis   showed that there exists a holomorphic function $f$ in $\D$ such that for every compact set $K \subset \mathbb{C} \setminus \mathbb{D}$ with connected complement and every  continuous function $g:K\to \mathbb{C}$ which is holomorphic in the interior of $K$, there exists a subsequence of Taylor polynomials of $f$ which converges uniformly to $g$ on $K$.
  Even though holomorphic functions with universal Taylor series are generic in the sense of Baire, no explicit example is known.
  It turns out that all such functions have wild boundary behaviour, see
\cite{gardiner14,gardiner-khavinson,gardiner-manolaki3,gardiner18}.
 This erratic behaviour occurs even if we merely have universal approximation on a large subset of the unit circle: for example,  in \cite{gardiner-manolaki2}, the authors showed that if $E \subset \mathbb{T}$ is a closed set of positive measure and $f$ has a universal Taylor series on $E$, then the set
$f(T_{\zeta})$ is dense in $\mathbb{C}$ for almost every $\zeta\in E$, where $T_{\zeta}\subset \mathbb{D}$ is a Stolz angle with vertex at $\zeta$.
 On the other hand, Beise and M\"uller \cite{beise-muller} showed that if $E \subset \mathbb{T}$ is a closed set of measure $0$, then there exists a function with
a universal Taylor series on $E$ which lies in the Hardy space $H^p$, $1\le p<\infty$ (and therefore has non-tangential limits almost everywhere on $\mathbb{T}$).

The main goal of this paper is to establish an analogue of the result of Beise and M\"uller for o.p.a.~of reciprocals of functions in $H^2$.   To formulate our main result, we introduce some notation.  For
$f \in (H^2)^{*} := H^2 \setminus \{0\}$, we denote by $\bigl (Q_n(1/f) \bigr)$ the sequence of o.p.a.~of the reciprocal of $f$.  For a closed set $E \subset \T,$ we denote by $C(E)$ the space of continuous functions on $E$ equipped with the topology of uniform convergence on $E$.

\begin{definition}\label{universalE}
We say  that a function  $f \in (H^2)^{*}$ belongs to the class $\mathcal{U}_{E}$ of functions with universal o.p.a.~ on $E$ if for each $g\in C(E),$ there exists a subsequence  of o.p.a.~$\bigl (Q_{m_s}(1/f) \bigr )$ such that $Q_{m_s}(1/f)\to g$ in $C(E).$
\end{definition}

We show the following result:

\begin{theorem}\label{universal opa}
If $E \subset \T$ is a closed set of measure $0$, then $\mathcal{U}_{E}$ is a $G_\delta$ dense subset of $H^2$.
\end{theorem}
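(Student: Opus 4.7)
The plan is to prove the theorem by the Baire category method, along the lines of \cite{nestoridis} and \cite{beise-muller}. First I would realise $\mathcal U_E$ as a $G_\delta$ set: since $E$ is compact and metrisable, $C(E)$ is separable, so I fix a countable dense set $\{g_j\}_{j\in\N}\subset C(E)$ and rewrite
$$
\mathcal U_E \;=\; \bigcap_{j,k\in\N}\,\bigcup_{m\in\N}\,\Bigl\{f\in H^2\setminus\{0\}:\|Q_m(1/f)-g_j\|_{C(E)}<1/k\Bigr\}.
$$
(The nontrivial inclusion uses density of $\{g_j\}$ in $C(E)$.) Each set inside the union is open, because the coefficients of $Q_m(1/f)$ solve a finite linear system whose Gram matrix $\bigl(\langle z^i f,z^j f\rangle_{H^2}\bigr)$ and right-hand side $\bigl(\langle 1,z^j f\rangle_{H^2}\bigr)$ depend continuously on $f\in H^2\setminus\{0\}$ (and are invertible there, since $f,zf,\dots,z^m f$ are linearly independent whenever $f\not\equiv 0$). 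Restriction from a polynomial to $E\subset\T$ is continuous into $C(E)$, so $f\mapsto Q_m(1/f)|_E$ is continuous on $H^2\setminus\{0\}$, and $\mathcal U_E$ is $G_\delta$.

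For density, Baire's theorem reduces matters to showing that, for each fixed $j$ and $k$, the open set $\bigcup_m\{\dots\}$ is dense in $H^2$. Given $f\in H^2$, $g:=g_j$, $\delta:=1/k$ and $\varepsilon>0$, I would first replace $g$ by $g+c$ for a small generic constant $c\in\mathbb C$ so that $g$ is bounded away from $0$ on $E$ and hence $1/g\in C(E)$; this costs an arbitrarily small error in $C(E)$. I would then invoke the paper's simultaneous zero-free approximation theorem to produce a polynomial $\tilde f$ satisfying $\|\tilde f-f\|_{H^2}<\varepsilon$, $\|\tilde f-1/g\|_{C(E)}<\varepsilon$, and whose zeros are simple and lie outside $\overline{\D}$.

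For such a $\tilde f$, the fact recalled in the introduction (from \cite{opa1}) guarantees that $Q_m(1/\tilde f)\to 1/\tilde f$ uniformly on every closed subset of $\T$ missing the zeros of $\tilde f$; since those zeros are outside $\overline{\D}$, the convergence is uniform on all of $\T$, hence on $E$. On the other hand $\tilde f\approx 1/g$ on $E$ and $|\tilde f|$ is bounded below there, which forces $1/\tilde f\approx g$ uniformly on $E$. Taking $m$ large enough that $\|Q_m(1/\tilde f)-1/\tilde f\|_{C(E)}<\delta/2$ then puts $\tilde f$ into the stage-$m$ open set, proving density and, by Baire, the theorem.

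The only serious obstacle is the middle ingredient: constructing a single polynomial $\tilde f$ that is simultaneously close to $f$ in the global $H^2$ metric, close to the prescribed $1/g$ in the uniform metric on the measure-zero set $E\subset\T$, and zero-free on $\overline{\D}$. The tension is that $f|_E$ and $1/g$ are unrelated, yet a polynomial approximant close to $f$ in $H^2$ is only free to deviate from $f$ on a measure-zero set; the interpolation has to exploit the smallness of $E$ (in the spirit of Rudin--Carleson) while preserving the zero-free condition. This is precisely the new ``simultaneous zero-free approximation'' theorem flagged in the abstract, and it carries the real content of the argument; once it is in hand, the Baire machinery above assembles routinely into Theorem \ref{universal opa}.
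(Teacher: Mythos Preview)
Your $G_\delta$ representation and the continuity argument for $f\mapsto Q_m(1/f)|_E$ match the paper's Proposition~\ref{innercontinuity} and Proposition~\ref{keyproposition}(a),(b). The gap is in the density step. You propose to apply Theorem~\ref{simul-approx} to the pair $(f,1/g)$ to produce a polynomial $\tilde f$ zero-free on $\overline{\D}$ with $\|\tilde f-f\|_{H^2}<\varepsilon$. But Theorem~\ref{simul-approx} requires the $H^2$ datum to be \emph{non-vanishing on $\D$}, and a general $f\in H^2$ need not be. This cannot be repaired by a preliminary density reduction: by Hurwitz's theorem the set of zero-free $H^2$ functions (together with $0$) is closed under $H^2$-convergence, so zero-free functions are not dense in $H^2$, and no polynomial zero-free on $\overline{\D}$ can approximate, say, $f(z)=z$ in $H^2$.

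The paper circumvents this via the inner--outer factorisation $f=f_I\cdot f_O$ (after first reducing to $f$ holomorphic past $\overline{\D}$ with $f(0)\ne0$). The outer part $f_O$ is zero-free on $\D$, so Theorem~\ref{simul-approx} applies to $h:=(\overline{f_I(0)})^{-1}f_O$ and yields a polynomial $P$, zero-free on $\overline{\D}$, close to $h$ in $H^2$ and to $1/g$ on $E$. One then sets $F:=\overline{f_I(0)}\,f_I\cdot P$, which is close to $f$ in $H^2$ because multiplication by the bounded function $\overline{f_I(0)}\,f_I$ is contractive. The price is that $F$ is no longer zero-free on $\D$, so one cannot invoke $Q_m(1/F)\to1/F$ directly. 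The missing ingredient is Proposition~\ref{invinner}: for any inner function $\theta$ one has $Q_m\bigl(1/(\theta h)\bigr)=\overline{\theta(0)}\,Q_m(1/h)$. Applied here it gives $Q_m(1/F)=Q_m(1/P)$, and since $P$ \emph{is} zero-free on $\overline{\D}$ one gets $Q_m(1/P)\to1/P\approx g$ uniformly on $E$, closing the argument. This invariance of o.p.a.\ under inner multiples is the idea your plan is lacking.
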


\begin{remark}
In particular, Theorem \ref{universal opa} implies that for any $\zeta \in \T,$ there are functions $f \in H^2$ for which $\left\{ Q_n(1/f)(\zeta): n \in \N \right\}$ is dense in $\mathbb{C}.$
\end{remark}

One crucial ingredient in the case of universal Taylor series is a result on simultaneous polynomial approximation in $H^2 \times C(E)$, see \cite[Lemma 2.1]{beise-muller}. For the purposes of our paper,  we needed to prove a similar result involving simultaneous  zero-free  approximation, which is of independent interest.

\begin{theorem}
\label{simul-approx}
Let $E \subset \T$ be a closed set of measure $0.$  Let $g \in H^2$ be non-vanishing on $\D$, and let $f \in C(E)$ be non-vanishing on $E$. Then there exists a sequence of polynomials $P_n$ which do not vanish on the closed unit disc $\overline{\mathbb{D}}$ such that $P_n \to g$ in $H^2$ and $P_n \to f$ in $C(E)$.
\end{theorem}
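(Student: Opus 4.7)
The plan is to construct a single function $F$ in the disc algebra $A(\D)$ (continuous on $\overline{\D}$, holomorphic on $\D$) satisfying (a) $F|_E = f$, (b) $F$ is zero-free on $\overline{\D}$, and (c) $\|F - g\|_{H^2} < \varepsilon$. Given such an $F$, Mergelyan's theorem on $\overline{\D}$ yields polynomials $P$ with $\|P - F\|_{L^\infty(\overline{\D})}$ as small as we please; since $|F|$ attains a strictly positive minimum on the compact set $\overline{\D}$, sufficiently close uniform approximants $P$ are themselves zero-free on $\overline{\D}$. Such $P$ then satisfy $\|P - g\|_{H^2} < 2\varepsilon$ and $\|P - f\|_{C(E)} < \varepsilon$, and letting $\varepsilon \to 0$ produces the required sequence.

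To construct $F$, fix $r \in (0,1)$ close enough to $1$ that the dilate $g_r(z) := g(rz)$ satisfies $\|g_r - g\|_{H^2} < \varepsilon/2$. Since $g$ is zero-free on $\D$, the function $g_r$ is zero-free on $\overline{\D}$ and belongs to $A(\D)$. The set $E$ is closed in $\T$ and of measure zero, hence has empty interior in $\T$; its connected components are therefore singletons, so $E$ is totally disconnected. Consequently the nonvanishing continuous function $f/g_r$ on $E$ admits a continuous logarithm $\xi \in C(E)$. By the Rudin--Carleson theorem we extend $\xi$ to a function $\Psi \in A(\D)$. Moreover, closed measure-zero subsets of $\T$ are classical peak sets for $A(\D)$, so we also fix $F_0 \in A(\D)$ with $F_0 \equiv 1$ on $E$ and $|F_0(z)| < 1$ for all $z \in \overline{\D} \setminus E$.

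Now set, for a large integer $n$ to be determined,
\begin{equation*}
F \;:=\; g_r \cdot \exp\bigl(F_0^n \Psi\bigr).
\end{equation*}
Then $F \in A(\D)$ is zero-free on $\overline{\D}$, and on $E$ we have $F = g_r \cdot \exp(\xi) = g_r \cdot (f/g_r) = f$, which delivers (a) and (b). For (c), writing $F - g_r = g_r\bigl(\exp(F_0^n \Psi) - 1\bigr)$ and using the elementary inequality $|e^w - 1| \leq |w|\,e^{|w|}$ together with $|F_0| \leq 1$ on $\overline{\D}$, we get
\begin{equation*}
\|F - g_r\|_{H^2} \;\leq\; \|g_r\|_\infty \, \|\Psi\|_\infty \, e^{\|\Psi\|_\infty} \, \|F_0^n\|_{L^2(\T)}.
\end{equation*}
The sequence $|F_0^n|$ is bounded by $1$ on $\T$ and converges pointwise to $\chi_E$; since $|E| = 0$, dominated convergence gives $\|F_0^n\|_{L^2(\T)} \to 0$. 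Fixing $r$ first and then choosing $n$ large we obtain $\|F - g_r\|_{H^2} < \varepsilon/2$, hence $\|F - g\|_{H^2} < \varepsilon$, which is (c).

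The main obstacle is to devise a construction that preserves all three of (a), (b), (c) simultaneously. The multiplicative exponential ansatz $g_r \exp(F_0^n \Psi)$ is tailored precisely to interpolate between $g_r$ on the bulk of the disc and $f$ on $E$ while remaining manifestly zero-free. An additive analogue such as $g_r + F_0^n(\widetilde{f} - g_r)$ would deliver (a) and (c) equally well but has no built-in mechanism to prevent spurious zeros, which would be fatal for the concluding Mergelyan step that converts $F$ into zero-free polynomials.
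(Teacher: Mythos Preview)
Your proof is correct, and it follows the same overall strategy as the paper: reduce to $g\in A(\D)$ zero-free on $\overline{\D}$, build a zero-free $F\in A(\D)$ of the form $g\cdot\exp(\text{something small in }L^2(\T)\text{ but equal to }\log(f/g)\text{ on }E)$, then pass to polynomials by uniform approximation. The difference lies in how the exponent is manufactured.

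The paper first discretizes: it approximates $f/g$ by a piecewise-constant function $e^{v_j}$ on a finite clopen partition $E=\bigcup E_j$, and then builds the exponent as $\sum v_j h_{j,n}$, where each $h_{j,n}$ is an explicit ``Rudin function'' (Lemma~\ref{fundlemma}) that equals $1$ on $E_j$, is uniformly bounded by $2$, and is uniformly small off a shrinking neighbourhood of $E_j$. You instead exploit the total disconnectedness of $E$ to take a global continuous logarithm $\xi=\log(f/g_r)$, extend it once via Rudin--Carleson to $\Psi\in A(\D)$, and localize with powers of a single peak function $F_0$ for $E$; dominated convergence then gives $\|F_0^n\|_{L^2(\T)}\to 0$. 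Your route is tidier for $H^2$: it avoids the piecewise-constant reduction and the ad hoc Lemma~\ref{fundlemma}, at the cost of invoking two black boxes (Rudin--Carleson with norm control, and the peak-set characterization). The paper's more hands-on construction, however, pays off later: because the Rudin functions are built explicitly from outer functions $e^{-u-i\tilde u}$, their Dirichlet integrals can be controlled directly, which is exactly what is needed to push the argument through for $\mathcal D_\alpha$ in Section~\ref{sec4}. Your peak-function approach would need additional work (bounding $\mathcal D(F_0^n\Psi)$) to transfer to that setting.
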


The corresponding result in \cite{beise-muller} uses functional analysis techniques which cannot be adapted to our zero-free setting. Instead, we give a  constructive  argument based  on  a  modification  of  the  Rudin-Carleson  Theorem \cite{rudin, carleson}. In many situations, polynomial  approximation in  the  non-vanishing setting is a  delicate issue. For example, it is not known if there is a non-vanishing version of the classical Mergelyan's Theorem; see \cite{andersson, andersson-gauthier, danielyan, gauthier-knese} for partial results and connections with the Riemann Hypothesis. 

We first show the existence of universal optimal polynomial approximants for the Hardy space in Section \ref{sec:existence-opa}, subject to the verification of the result on simultaneous zero-free approximation, which we prove in Section \ref{sec:simul-approx}. Then, in Section \ref{sec4}, we establish the corresponding results for the case of the Dirichlet space $\mathcal D$ (see Theorems \ref{universal dirichlet} and \ref{simul-approxD}), in line with the recent work of M\"uller \cite{muller2}. Finally, in Section \ref{conclusion}, we conclude with some remarks and possible directions for future research.

\section{Existence of functions with universal optimal polynomial approximants}
\label{sec:existence-opa}
In this section, we prove Theorem \ref{universal opa} on the existence of functions in $H^2$ whose optimal polynomial approximants are universal on a given closed subset of the unit circle of measure zero. We rely on three key properties of $H^2$: the action of the shift is isometric, there exist almost everywhere well-defined boundary values, and functions factor as a product of their inner and outer parts.  We begin with some auxiliary results:
\begin{proposition}\label{innercontinuity}
Let $n \in \N$ and let $K \subset \mathbb{C}$ be compact. The mapping $\mathcal{Q}_n: (H^2)^* \rightarrow C(K)$ which takes a function $f$ to $Q_n(1/f)$ is continuous.
	\end{proposition}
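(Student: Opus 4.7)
My approach is to reduce the problem to a continuity statement about the coefficient vector of $Q_n(1/f)$. Writing $Q_n(1/f)(z) = \sum_{k=0}^n c_k(f) z^k$, one has a coefficient map $c : (H^2)^* \to \mathbb{C}^{n+1}$, and $\mathcal{Q}_n$ factors as $c$ followed by the evaluation map $\mathbb{C}^{n+1} \to C(K)$, $(c_0,\ldots,c_n) \mapsto \sum_{k=0}^n c_k z^k \big|_K$. The latter is a continuous linear map (its norm is bounded by $\max_{z \in K} \sum_{k=0}^n |z|^k$, which is finite since $K$ is compact), so it suffices to prove that $c$ is continuous at every $f \in (H^2)^*$.

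Next I would characterize $c(f)$ by the normal equations. By definition $Q_n(1/f) \cdot f$ is the orthogonal projection of the constant function $1$ onto the subspace $V_n(f) := \mathrm{span}\{z^k f : 0 \le k \le n\} \subset H^2$. Setting $q(z) = \sum_{k=0}^n c_k z^k$, orthogonality translates into the linear system
\[
\sum_{k=0}^n G_{jk}(f)\, c_k(f) \;=\; b_j(f), \qquad 0 \le j \le n,
\]
where the Gram matrix and right-hand side are
\[
G_{jk}(f) = \langle z^k f,\, z^j f \rangle_{H^2}, \qquad b_j(f) = \langle 1,\, z^j f\rangle_{H^2}.
\]
A direct computation gives $b_0(f) = \overline{f(0)}$ and $b_j(f)=0$ for $j \ge 1$, but what matters is that $b_j$ is a bounded conjugate-linear functional of $f$.

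I would then verify the continuity ingredients. Each entry $G_{jk}$ is a continuous sesquilinear form on $H^2$ (since the shift $S : f \mapsto zf$ is bounded, indeed isometric, on $H^2$), and each $b_j$ is bounded conjugate-linear, so $f \mapsto (G(f), b(f))$ is continuous from $H^2$ into $\mathbb{C}^{(n+1)^2} \times \mathbb{C}^{n+1}$. Moreover, for any $f \in (H^2)^*$ the functions $\{z^k f\}_{k=0}^n$ are linearly independent in $H^2$: a nontrivial relation $\bigl(\sum c_k z^k\bigr) f \equiv 0$ on $\D$ would force $\sum c_k z^k \equiv 0$ by the identity theorem. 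Hence $G(f)$ is positive definite, and in particular invertible, throughout $(H^2)^*$. Since matrix inversion is continuous on the open set of invertible matrices, $c(f) = G(f)^{-1} b(f)$ depends continuously on $f \in (H^2)^*$, which finishes the argument.

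The proof is essentially a package of routine observations, so I do not foresee a serious obstacle. The only point that requires any care is confirming that the Gram matrix $G(f)$ is invertible on all of $(H^2)^*$ — this rests on the linear independence of $\{z^k f\}_{k=0}^n$ for $f \not\equiv 0$, which in turn is just the identity theorem for analytic functions. Once invertibility is in place, the continuity of the coefficient map, and hence of $\mathcal{Q}_n$, is automatic.
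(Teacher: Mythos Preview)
Your proposal is correct and follows essentially the same approach as the paper: both reduce the question to continuity of the coefficient vector of $Q_n(1/f)$, obtained as the solution of the normal equations $G(f)c(f)=b(f)$ with Gram matrix $G_{jk}(f)=\langle z^k f, z^j f\rangle$ and right-hand side $b(f)=(\overline{f(0)},0,\ldots,0)$, and then invoke continuity of matrix inversion. If anything, your write-up is slightly more detailed, since you explicitly justify the invertibility of $G(f)$ via linear independence of $\{z^k f\}_{k=0}^n$, whereas the paper simply asserts it.
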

\begin{proof}
Let $n\in\mathbb{N}$, and let $f \in (H^2)^*.$
We denote by $a_j$ the $j^{th}$ coefficient of the optimal polynomial approximant $Q_n(1/f)$.
Let $\mathcal{P}_n$ the space of polynomials of degree at most $n$.
Since $Q_n(1/f)f$ is the orthogonal projection of $1$ onto the space $\mathcal{P}_nf$,
$$\bigl ( Q_n(1/f)f-1 \bigr ) \perp z^t f, \ \ \ t=0, \dots, n.$$
It follows that the vector $A=(a_j)_{j=0}^n$ of the Taylor coefficients of $Q_n(1/f)$ is given as the unique solution to the linear system
$$AM=C,$$
where $M$ is the $(n+1)\times (n+1)$ matrix with entries $$M_{j,k}=\langle z^j f,z^k f \rangle \ \ \ j,k=0,\dots,n$$ and $C$ is the vector given by $$C=(c_j)_{j=0}^n= \bigl (\langle 1,z^j f \rangle \bigr )_{j=0}^n= \bigl (\overline{f(0)}, 0, \dots, 0 \bigr).$$
Since the vector $C$ and matrix $M$ vary continuously in $f \in (H^2)^{*}$, and the matrix $M$ is invertible, the vector $A$ also varies continuously in $f \in (H^2)^{*}$.
The continuity of the mapping $\mathcal{Q}_n$ follows.
\end{proof}

As noted in \cite{opa4}, the optimal polynomial approximants in $H^2$ are essentially determined by their outer part.  The following proposition makes this dependence explicit.

\begin{proposition}\label{invinner}
If $g$ is an inner function in $H^2$ and $f \in (H^2)^*,$  then, for each $n \in \mathbb{N}$,
 $$Q_n \bigl (1/(fg) \bigr ) =\overline{g(0)} \, Q_n(1/f).$$
\end{proposition}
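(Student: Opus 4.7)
My plan is to verify the identity via the orthogonality characterization of the optimal polynomial approximant used already in the proof of Proposition \ref{innercontinuity}: a polynomial $Q \in \mathcal{P}_n$ equals $Q_n(1/h)$ if and only if $\langle Qh - 1, qh \rangle_{H^2} = 0$ for every $q \in \mathcal{P}_n$, and such a polynomial is unique. So setting $P := \overline{g(0)}\, Q_n(1/f)$, I would show that $P$ satisfies the orthogonality condition defining $Q_n(1/(fg))$, and then invoke uniqueness.

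The key ingredient is that multiplication by an inner function is an isometry on $H^2$. Concretely, since $|g|=1$ almost everywhere on $\T$, for any $h_1,h_2\in H^2$ one has $\langle h_1 g, h_2 g\rangle_{H^2}=\int_\T h_1\overline{h_2}|g|^2\,dm=\langle h_1,h_2\rangle_{H^2}$. Applying this with $h_1=Pf$ and $h_2=qf$ gives
\[
\langle P\cdot fg,\, q\cdot fg\rangle = \langle Pf, qf\rangle = \overline{g(0)}\,\langle Q_n(1/f)\,f,\, qf\rangle,
\]
and by the defining property of $Q_n(1/f)$, the inner product on the right equals $\langle 1, qf\rangle = \overline{q(0)f(0)}$. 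On the other hand, the reproducing-kernel identity $\langle 1,\phi\rangle_{H^2}=\overline{\phi(0)}$ gives $\langle 1, q\cdot fg\rangle = \overline{q(0)f(0)g(0)}$. Subtracting, the two expressions $\overline{g(0)}\cdot\overline{q(0)f(0)}$ and $\overline{q(0)f(0)g(0)}$ cancel, so $\langle P\cdot fg-1,\, q\cdot fg\rangle=0$ for every $q\in\mathcal{P}_n$. By uniqueness of the o.p.a., $P=Q_n(1/(fg))$.

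The only point requiring a moment of care is the degenerate case $g(0)=0$, where the claim reduces to $Q_n(1/(fg))\equiv 0$. Here I would argue directly: since $(qfg)(0)=0$ for every polynomial $q$, the reproducing-kernel identity gives $\|qfg-1\|_{H^2}^2 = \|qfg\|_{H^2}^2+1$, which is minimized at $q=0$; thus $Q_n(1/(fg))=0=\overline{g(0)}\,Q_n(1/f)$, consistent with the formula. I do not anticipate a genuine obstacle in this proof — the argument is essentially a one-line consequence of the isometric-shift property of inner functions combined with the variational characterization of $Q_n$ — the only thing to watch is that the factor $\overline{g(0)}$ (rather than $g(0)$) is what makes the two reproducing-kernel evaluations at $0$ cancel.
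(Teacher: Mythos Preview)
Your argument is correct and is essentially the same as the paper's: both exploit that multiplication by an inner function is an $H^2$-isometry together with the orthogonality characterization of $Q_n$, the only difference being that the paper encodes the orthogonality conditions as the linear system $AM=C$ (resp.\ $BN=D$) and deduces $N=M$, $D=\overline{g(0)}C$, whereas you verify the orthogonality relations directly. Your separate treatment of the case $g(0)=0$ is fine but not actually needed, since your main computation already yields $\langle P\cdot fg-1,\,q\cdot fg\rangle=0$ in that case as well.
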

\begin{proof}
For the function $f \in (H^2)^*$, let $A$, $C$ and $M$ be as in the  proof of Proposition \ref{innercontinuity}. Similarly,
the vector $B=(b_j)_{j=0}^n$ of the Taylor coefficients of $Q_n \bigl (1/(fg) \bigr )$ is given as the unique solution to the linear system
$$BN=D,$$
where $N$ is the $(n+1)\times (n+1)$ matrix with entries $$N_{j,k}=\langle z^j fg,z^k fg \rangle\ \ \ j,k=0,\dots,n$$ and $D$ is the vector given by $$D=(d_j)_{j=0}^n= \bigl (\langle 1,z^j fg \rangle \bigr )_{j=0}^n=\bigl (\overline{f(0)}\overline{g(0)}, 0, \dots, 0 \bigr)=\overline{g(0)}C.$$
Since $g$ is an inner function in $H^2$, multiplication by $g$ is an isometry and thus, for all $j$ and $k$,
$$\langle z^j fg,z^k fg \rangle=\langle z^j f,z^k f \rangle,$$
which implies that the matrices $M$ and $N$ are equal. Hence
$$B=DN^{-1}=\overline{g(0)}CM^{-1}=\overline{g(0)}A,$$
which gives the desired conclusion that $Q_n \bigl (1/(fg) \bigr ) =\overline{g(0)} \, Q_n(1/f).$
\end{proof}

\begin{remarks}
(i) The above proposition shows that if $g$ is an inner function with $g(0) \ne 0$, then $f \in (H^2)^*$ has universal o.p.a.~if and only if $fg \in (H^2)^*$ has universal o.p.a.

(ii) Each function in $H^2$ can be written as a product of an inner and an outer function. Since cyclic functions in $H^2$ are precisely the outer functions, if there exists a function with universal o.p.a.~on some set, then there exists a cyclic function with the same property. In particular, Theorem \ref{universal opa} tells us that, for any closed set $E \subset \T$ of zero measure, there exists a cyclic function $f \in H^2$ with universal o.p.a.~on $E$.  In this case, $Q_n(1/f)$ converges to $1/f$ locally uniformly on $\mathbb{D}$ while $\bigl \{Q_n(1/f): n\in\mathbb{N}\bigr \}$ is dense in $C(E)$.

(iii) Let $(z_n)$ be a (finite or infinite) sequence in $\mathbb{D} \setminus \{0\}$ which satisfies the Blaschke condition
 $$\sum _{n=1}^{\infty} (1-|z_n|)<\infty.$$ Then there exists a function $f$ with universal o.p.a.~having zeros at $(z_n)$. Indeed, we can obtain such a function by multiplying a cyclic function with universal o.p.a. (which is zero-free on $\mathbb{D}$) with a suitable Blaschke product.
\end{remarks}

To establish the existence of functions with universal optimal polynomial approximants in  $H^2$,  we will use the Baire category theorem.
We recall that the collection of functions with universal optimal approximants on $E$ is defined as:
$$
\mathcal{U}_{E} = \bigl \{
 f\in (H^2)^{*} : \forall g\in C(E) \, \exists (m_s) : Q_{m_s}(1/f)\to g \mbox{ in } C(E) \bigr \}.
$$
Let $\{P_n : n\in\mathbb{N}\}$ be the collection of all polynomials with rational coordinates that are zero-free on $E$. This set is clearly dense in $C(E)$.
For each $k,n,m \in \mathbb{N}$, we consider the set
$$
E_{k,n,m}= \bigl \{ f\in (H^2)^{*}:  \|Q_{m}(1/f)-P_n\|_{C(E)}<1/k \bigr \}.
$$
In view of the Baire category theorem, in order to complete the proof of Theorem \ref{universal opa}, it suffices to prove the following proposition:

\begin{proposition}\label{keyproposition}
Let $E \subset \T$ be a closed set of measure $0.$ Then:
\begin{itemize}
\item[(a)] The set $\mathcal{U}_{E}$ can be written as
$$\mathcal{U}_{E}=\bigcap _{k,n=1}^{\infty}\bigcup _{m=1}^{\infty} E_{k,n,m}.$$

\item[(b)] For each $k,n,m \in \mathbb{N}$, the set $E_{k,n,m}$ is open in $H^2$.

\item[(c)] For each $k,n \in \mathbb{N}$, the set $\bigcup _{m=1}^{\infty} E_{k,n,m}$ is dense in $H^2$.
\end{itemize}
\end{proposition}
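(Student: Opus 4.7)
The plan is to dispatch (a) and (b) quickly and concentrate the work on (c). For (a), the inclusion $\mathcal{U}_E\subseteq\bigcap_{k,n}\bigcup_m E_{k,n,m}$ follows from the definition of $\mathcal{U}_E$ applied with $g=P_n$. Conversely, if $f$ lies in the intersection, then $\{Q_m(1/f)\}_m$ contains approximations to every $P_n$, and by density of $\{P_n\}$ in $C(E)$ the set $\{Q_m(1/f)\}_m$ is itself dense in $C(E)$ and therefore infinite; for any $g\in C(E)$ one then extracts an increasing subsequence $(m_s)$ with $Q_{m_s}(1/f)\to g$. Part (b) is immediate from Proposition \ref{innercontinuity} applied with $K=E$ (compact, since $E\subset\mathbb{T}$ is closed): the map $f\mapsto Q_m(1/f)$ is continuous from $(H^2)^*$ into $C(E)$, so $E_{k,n,m}$ is the preimage of the open ball $B(P_n,1/k)\subset C(E)$, and openness in $H^2$ follows since $(H^2)^*=H^2\setminus\{0\}$ is open in $H^2$.

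For the density in (c), fix $h\in H^2$ and $\varepsilon>0$. Since $\{h\in H^2:h(0)\neq 0\}$ is open and dense in $H^2$, I would first reduce to the case $h(0)\neq 0$. Factor $h=I\cdot O$ into its inner and outer parts; then $I(0)\neq 0$ and $O$ is non-vanishing on $\mathbb{D}$. The polynomial $\overline{I(0)}/P_n$ is continuous and non-vanishing on $E$ because $P_n$ is. Now apply Theorem \ref{simul-approx} with $g=O$ and boundary target $\overline{I(0)}/P_n$ to obtain a polynomial $p$, non-vanishing on $\overline{\mathbb{D}}$, making $\|p-O\|_{H^2}$ and $\|p-\overline{I(0)}/P_n\|_{C(E)}$ as small as desired. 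A further small coefficient perturbation preserves these approximations and the non-vanishing on $\overline{\mathbb{D}}$, while ensuring that $p$ has only simple roots (necessarily all outside $\overline{\mathbb{D}}$).

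Now set $f:=p\cdot I$. Since $I$ is inner, $\|f-h\|_{H^2}=\|p-O\|_{H^2}<\varepsilon$; and by Proposition \ref{invinner}, $Q_m(1/f)=\overline{I(0)}\,Q_m(1/p)$ for every $m$. Because $p$ is a polynomial with simple roots outside $\overline{\mathbb{D}}$, the observation from \cite{opa1} recalled in the introduction yields $Q_m(1/p)\to 1/p$ uniformly on $\mathbb{T}$, hence on $E$. Consequently $Q_m(1/f)\to\overline{I(0)}/p$ uniformly on $E$, and by construction this limit is within any prescribed tolerance of $P_n$ on $E$. Choosing $m$ large enough gives $\|Q_m(1/f)-P_n\|_{C(E)}<1/k$, so $f\in E_{k,n,m}$, which furnishes the required density.

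The decisive step is the density claim in (c), and within it the crucial ingredient is the zero-free simultaneous approximation of Theorem \ref{simul-approx}: we need a polynomial that approximates $O$ in $H^2$ and $\overline{I(0)}/P_n$ on $E$ \emph{while remaining zero-free on} $\overline{\mathbb{D}}$. The non-vanishing is what allows the inner--outer identity of Proposition \ref{invinner} to be invoked and the explicit boundary convergence $Q_m(1/p)\to 1/p$ for a nice polynomial to combine cleanly into the density statement.
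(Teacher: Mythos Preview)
Your proof is correct and follows essentially the same route as the paper: parts (a) and (b) are handled identically, and for (c) both arguments reduce to $h(0)\neq 0$, factor $h=I\cdot O$, invoke Theorem~\ref{simul-approx} to approximate $O$ in $H^2$ and a suitable reciprocal target on $E$ by a zero-free polynomial $p$, set $F=I\cdot p$ (up to a unimodular constant), and then use Proposition~\ref{invinner} together with the boundary convergence $Q_m(1/p)\to 1/p$ from \cite{opa1}. The only substantive difference is that you insert an extra perturbation to force $p$ to have simple roots so as to match the exact hypothesis quoted in the introduction, whereas the paper appeals directly to the (slightly stronger) statement in \cite{opa1} that uniform convergence on $\mathbb{T}$ holds for any polynomial non-vanishing on $\overline{\mathbb{D}}$; your extra step is harmless and makes the citation self-contained, but is not strictly needed.
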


Indeed, the above proposition implies that $\mathcal{U}_{E}$ can be expressed as a countable intersection of open dense sets in $H^2$. Since $H^2$ is  a complete metric space, we can apply the Baire category theorem to deduce that  $\mathcal{U}_{E}$ is a $G_\delta$ dense subset of $H^2$, which establishes Theorem \ref{universal opa}.

\begin{proof}
(a) Let $f\in\mathcal{U}_{E}$ and let $k,n \in \mathbb{N}$. Since $P_n \in C(E)$, there is an $m \in \mathbb{N}$ such that $\|Q_{m}(1/f)-P_n\|_{C(E)}<1/k$. Thus $f\in \bigcup _{m=1}^{\infty} E_{k,n,m}$. Hence
$$
\mathcal{U}_{E}\subset\bigcap _{k,n=1}^{\infty}\bigcup _{m=1}^{\infty} E_{k,n,m}.
$$
To show the reverse inclusion, let $f\in\bigcap _{k,n=1}^{\infty}\bigcup _{m=1}^{\infty} E_{k,n,m}$.
Fix a continuous function $g\in C(E)$ and  $\varepsilon >0$. Since $\{P_n : n\in\mathbb{N}\}$ is dense in $C(E)$, we can choose $P_{n}$ so that $\|P_n-g\|_{C(E)}<\varepsilon/2$.  Choose a large integer $k \ge 1$ for which $1/k<\varepsilon/2$. For these choices of $n$ and $k$, there is an $m\in \mathbb{N}$ such that $\|Q_m(1/f)-P_n\|_{C(E)}<1/k$.
The triangle inequality gives $\|Q_m(1/f) - g\|_{C(E)} < \varepsilon$.

Since $\varepsilon > 0$ can be taken to be arbitrarily small, we can find a subsequence $\bigl (Q_{m_s}(1/f) \bigr)$ of o.p.a.~of $1/f$ such that $Q_{m_s}(1/f)\to g$  uniformly on $E$. This implies that
$$\mathcal{U}_{E}\supset\bigcap _{k,n=1}^{\infty}\bigcup _{m=1}^{\infty} E_{k,n,m},$$
which gives the desired conclusion.

(b) Fix $k,n,m \in \mathbb{N}$. It is easy to see that $E_{k,n,m}$ is the inverse image of the open ball in $C(E)$ with center at $P_n$ and radius $1/k$ via the mapping $\mathcal Q_m$.  By Proposition \ref{innercontinuity}, the mapping
 $\mathcal Q_m: (H^2)^* \to C(E)$ is continuous, and therefore $E_{k,n,m}$ is open in $(H^2)^*$. Since  $(H^2)^*$ is open in $H^2$,
$E_{k,n,m}$  is also open in $H^2$.

(c) Let $k,n \in \mathbb{N}$. For simplicity, we write $g:=P_n$.
To show that $\bigcup _{m=1}^{\infty} E_{k,n,m}$ is dense in $H^2$, it suffices to prove the following claim:
\begin{claim}
 For any  $f \in H^2$ and $\varepsilon > 0$, there exists a function $F \in (H^2)^*$ and an integer $m \ge 1$ such that
\begin{equation}
\label{eq:desired-properties}
 \|F - f\|_{H^2} < \varepsilon, \qquad \|Q_m(1/F)-g\|_{C(E)} < \varepsilon.
\end{equation}
\end{claim}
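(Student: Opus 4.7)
The plan is to use Theorem \ref{simul-approx} to build $F \in H^2$ so that $F$ approximates $f$ in $H^2$ while $1/F$ approximates $g$ on $E$, and then to check that the o.p.a.~$Q_m(1/F)$ track $1/F$ uniformly on $\overline{\mathbb{D}}$ (in particular on $E$). First I would reduce to the case $f(0)\neq 0$ by replacing $f$ with $f+\delta$ for a sufficiently small constant $\delta$; this is needed so that the inner factor $I$ of $f$ satisfies $I(0)\neq 0$, without which Proposition \ref{invinner} would force $Q_m(1/F)\equiv 0$ and destroy any hope of approximating a nonzero $g$. Writing $f=I\cdot O$ in its inner-outer factorization---so that $O$ is outer (hence non-vanishing on $\mathbb{D}$) and $I(0)\neq 0$---I would then apply Theorem \ref{simul-approx} to the pair $(O,\,\overline{I(0)}/g)$; these are valid inputs because $\overline{I(0)}/g$ is continuous and non-vanishing on $E$. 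The theorem then produces a polynomial $\tilde q$, zero-free on $\overline{\mathbb{D}}$, with $\|\tilde q-O\|_{H^2}$ and $\|\tilde q-\overline{I(0)}/g\|_{C(E)}$ as small as I wish.

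Set $F:=I\tilde q\in H^2$. Since $I$ is inner, multiplication by $I$ is an isometry on $H^2$, so $\|F-f\|_{H^2}=\|\tilde q-O\|_{H^2}$ is small, as required. By Proposition \ref{invinner}, $Q_m(1/F)=\overline{I(0)}\,Q_m(1/\tilde q)$ for every $m$, so the remaining task is to show that $Q_m(1/\tilde q)$ approximates $g/\overline{I(0)}$ in $C(E)$ for some (in fact, all sufficiently large) $m$.

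The main obstacle is the key technical step: for any polynomial $\tilde q$ zero-free on $\overline{\mathbb{D}}$, $Q_m(1/\tilde q)\to 1/\tilde q$ uniformly on $\overline{\mathbb{D}}$. My plan is to compare $Q_m(1/\tilde q)$ with the Taylor partial sums $S_m$ of $1/\tilde q$. Because $1/\tilde q$ is holomorphic in a neighborhood of $\overline{\mathbb{D}}$, $\|S_m-1/\tilde q\|_\infty=O(r^m)$ for some $r<1$, whence $\|S_m\tilde q-1\|_{H^2}=O(r^m)$. The orthogonality $Q_m(1/\tilde q)\tilde q-1\perp \tilde q\cdot\mathcal{P}_m$ (used already in the proof of Proposition \ref{innercontinuity}), combined with the Pythagorean identity, yields $\|(Q_m(1/\tilde q)-S_m)\tilde q\|_{H^2}=O(r^m)$; using $|\tilde q|\ge c>0$ on $\overline{\mathbb{D}}$ and the classical bound $\|p\|_\infty\le \sqrt{m+1}\,\|p\|_{H^2}$ for $p\in\mathcal{P}_m$ then upgrades this to $\|Q_m(1/\tilde q)-S_m\|_\infty\to 0$. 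Combining the resulting uniform convergence $Q_m(1/\tilde q)\to 1/\tilde q$ on $E$ with the controlled error $\|1/\tilde q-g/\overline{I(0)}\|_{C(E)}$ (coming from $\tilde q$ being close to $\overline{I(0)}/g$, together with $\tilde q$ and $g$ being bounded away from zero on $E$), and multiplying by $|I(0)|$, then delivers $\|Q_m(1/F)-g\|_{C(E)}<\varepsilon$ for all sufficiently large $m$, completing the proof of the claim.
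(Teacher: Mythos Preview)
Your proof is correct and follows essentially the same route as the paper: reduce to $f(0)\neq 0$, use the inner--outer factorization $f=I\cdot O$, apply Theorem~\ref{simul-approx} to produce a zero-free polynomial close to the outer part in $H^2$ and close to (a constant multiple of) $1/g$ on $E$, set $F$ to be the inner factor times this polynomial, and invoke Proposition~\ref{invinner} together with uniform convergence of $Q_m(1/\tilde q)$ to $1/\tilde q$. The only substantive difference is that the paper cites \cite{opa1} for the fact that $Q_m(1/\tilde q)\to 1/\tilde q$ uniformly on $\mathbb{T}$ when $\tilde q$ is a polynomial zero-free on $\overline{\mathbb{D}}$, whereas you supply a short self-contained argument via comparison with Taylor partial sums, the Pythagorean identity, the lower bound $|\tilde q|\ge c$ on $\mathbb{T}$, and the elementary estimate $\|p\|_\infty\le\sqrt{m+1}\,\|p\|_{H^2}$ for $p\in\mathcal{P}_m$; this argument is valid and a nice addition. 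The remaining differences (reducing to $f(0)\neq 0$ by adding a small constant rather than by density of functions holomorphic past $\overline{\mathbb{D}}$, and placing the constant $\overline{I(0)}$ in the $C(E)$ target rather than in the $H^2$ target) are purely cosmetic.
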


 Without loss of generality, we may assume that $f$ extends holomorphically to some  neighbourhood of the closed unit disc and that $f(0) \ne 0$, as this class of functions is dense in $H^2$.
Since $f\in H^2$, we can write $$f =  f_I \cdot f_O, $$ where $f_I$ is an inner function and $f_O$ is an outer function.
Since $f(0) \neq 0$, we have $f_I(0) \neq 0$ and therefore, the function $h:=(\overline{f_{I}(0)})^{-1} f_{O}\in H
^2$ has no zeros in the unit disc. Meanwhile, $1/g\in C(E)$ has no zeros on $E$.
 Thus, we can apply Theorem \ref{simul-approx} (which we will prove in Section \ref{sec:simul-approx}) to find a polynomial $P$ which does not vanish in the closed unit disc and satisfies
\begin{equation}
\label{P-h}
 \|P-h\|_{H^2} < \varepsilon
 \end{equation}
 and
\begin{equation*}
 \|P-1/g\|_{C(E)} < \delta ,
 \end{equation*}
where $\delta = \delta(g) > 0$ is chosen sufficiently small to guarantee
\begin{equation}\label{e}
\|g-1/	P\|_{C(E)} < \varepsilon /2.
\end{equation}

We will show that the function $F:=\overline{f_{I}(0)} f_{I}\cdot P$ satisfies the required inequalities (\ref{eq:desired-properties}). From Proposition \ref{invinner}, we know that $Q_m(1/P) = Q_m(1/F)$ for any $m \ge 1$.
Since $P$ is a polynomial with no zeros in the closed unit disc, we know (see, e.g., \cite{opa1}) that $Q_m(1/P)\to 1/P$ uniformly on the unit circle $\mathbb{T}$, and in particular on $E$.
 Thus, by choosing $m$ sufficiently large,  we can make
$$\|Q_m(1/F)-1/P\|_{C(E)}= \|Q_m(1/P)-1/P\|_{C(E)}< \varepsilon/2,
$$
which, combined with (\ref{e}) and the triangle inequality, implies that
$$\|Q_m(1/F)-g\|_{C(E)}<\varepsilon.$$
Finally,  using that $f_I$ is inner and (\ref{P-h}), we deduce $$\|F-f\|_{H^2} =\|\overline{f_{I}(0)} f_{I}(P - h)\|_{H^2} \le \|\overline{f_{I}(0)} f_{I}\|_{H^\infty}\cdot\|P - h\|_{H^2}  < \varepsilon,$$
  which establishes the desired claim.
\end{proof}

Thus the proof of Theorem \ref{universal opa} is complete, subject to the verification of Theorem \ref{simul-approx}.

\section{Simultaneous zero-free approximation}
\label{sec:simul-approx}

Let us now turn to the proof of Theorem \ref{simul-approx}. Our argument is motivated by a classical theorem of Rudin \cite{rudin} and Carleson \cite{carleson}, which says that {\em if $E$ is a closed subset  of measure zero on the unit circle and $f$ is a continuous function on $E$, then there exists a function $F$ analytic on the unit disc which extends continuously to the unit circle and agrees with $f$ on $E$.}

\begin{proof}[Proof of Theorem \ref{simul-approx}]

Let  $E \subset \mathbb{T}$ be a closed subset of measure zero,
 $g \in H^2$,  and $f\in C(E)$. Assume that $g$ does not vanish on the  unit disc and $f$ does not vanish on $E$. We want to construct a sequence of polynomials $P_n$ which do not vanish on the closed  unit disc such that $P_n \to g$ in $H^2$ and $P_n \to f$ in $C(E)$.
To simplify the construction, we employ the following reductions:

(i)
We may assume that $g$ is continuous up to the unit circle and is non-vanishing on the closed unit disc. Indeed, if for any $0 < r < 1$ we can find a sequence of (zero-free on $\overline{\mathbb{D}}$) polynomials which approximates $(g(rz), f(z))$, a diagonalization argument would produce a sequence of (zero-free on $\overline{\mathbb{D}}$) polynomials which approximates $(g(z), f(z))$.

(ii)
Instead of approximating $(g(z), f(z))$ by polynomials, we may approximate by functions $g_n$ in $A(\mathbb{D})$, that is, functions holomorphic on $\mathbb{D}$ and continuous on $\overline{\mathbb{D}}$. We can obtain an approximating sequence of
polynomials by taking Taylor polynomials of $g_n(r_nz)$ for suitable $r_n\in (0, 1)$.

(iii) It suffices to treat the case when  $f/g$ is piecewise-constant, that is, when $E = \bigcup_{j=1}^m E_j$ is a union of finitely many disjoint closed sets and $f/g = e^{v_j}$ on $E_j$ for some $v_j \in \mathbb{C}$. Indeed, since $f/g$ is continuous on $E$, for any $\varepsilon > 0$, we can choose $k \ge 1$ sufficiently large so that
$$
|(f/g)(\zeta_1) - (f/g)(\zeta_2)| < \varepsilon, \quad \zeta_1, \zeta_2 \in E, \quad |\zeta_1 - \zeta_2| < 4\pi/k.$$
We divide the circle into $k$ arcs of equal length.  Since $E$ is nowhere dense, we may slightly modify the arcs so that their endpoints are not contained in $E$ and their lengths are at most $4\pi/k$. Call the resulting arcs $I_1, I_2, \dots, I_k$.
Set $E_j := I_j \cap E$, discard the empty pieces and renumber the remaining sets from $1$ to $m$.
For each $j=1,\dots, m$, let $\zeta_j$ be an arbitrary point of $E_j$. Since $(f/g)(\zeta _j)\neq 0$ (by reduction (i)), there exists a complex number $v_j$ such that $e^{v_j}=(f/g)(\zeta _j)$. It follows that $|f/g - e^{v_j}| < \varepsilon$ on $E_j$ for any $j = 1, 2, \dots, m$. This construction shows that the space of
 pairs $(g,f)$ with $f/g$ piecewise-constant is dense in the space of all pairs covered by the theorem.

With the above reductions, to prove Theorem \ref{simul-approx}, we  construct a sequence of {\em uniformly bounded} functions $\Phi_n \in A(\mathbb{D})$, which are zero-free on $\overline{\mathbb{D}}$, such that $\Phi_n \to 1$ uniformly on compact subsets
 of $\overline{\mathbb{D}} \setminus E$ and $\Phi_n \to e^{v_j}$ uniformly on each $E_j$. Since $E$ is a closed set of measure $0$, the sequence $g_n = g \cdot \Phi_n$ tends to $g$ in $H^2$,
  while $g_n$ tends uniformly to $f$ on $E$.
To this end, we take
\begin{equation}
\label{eq:phin-def}
\Phi_n(z) \, = \,   \exp \biggl (\sum_{j=1}^m v_j \cdot h_{j,n}(z) \biggr ),
\end{equation}
where $h_{j,n}$ are the ``Rudin functions'' given by the lemma below, which is inspired by Lemma 1 of \cite{rudin}:

\begin{lemma}\label{fundlemma}
Let $E \subset \mathbb{T}$ be a closed set of measure 0 and $U$ be an open neighbourhood of $E$.
 For any $\varepsilon > 0$, there exists a function $h \in A(\mathbb{D})$ which satisfies
$$
|h(z)| \le 2, \ z \in \overline{\mathbb{D}}, \qquad
|h(z)| < \varepsilon, \ z \in \overline{\mathbb{D}} \setminus U,$$
$$
h(z) =1, \ z \in E.
$$
\end{lemma}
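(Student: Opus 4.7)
The plan is to deduce the lemma from the classical Rudin peak-function lemma, then amplify the smallness outside $U$ by taking a power. This is considerably shorter than re-running Rudin's construction with the neighbourhood $U$ built in, and it gives the claimed bound with room to spare.

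First, I would invoke the peak-function lemma of Rudin \cite{rudin}: for any closed subset $E \subset \T$ of Lebesgue measure zero there exists $\psi \in A(\D)$ such that $\psi \equiv 1$ on $E$, $|\psi(z)| \le 1$ throughout $\overline{\D}$, and $|\psi(z)| < 1$ for every $z \in \overline{\D} \setminus E$. This function $\psi$ is constructed as an outer function with carefully prescribed boundary modulus and phase on a decreasing family of open arcs that cover $E$; the hypothesis that $E$ has measure zero is used in an essential way to ensure that the prescribed boundary weight is integrable and that the resulting outer function is continuous up to $\overline{\D}$.

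Second, I would feed the neighbourhood $U$ into this peak function. The set $K := \overline{\D} \setminus U$ is compact and disjoint from $E$. Because $|\psi|$ is continuous on $\overline{\D}$ and strictly less than $1$ off $E$, it attains a maximum value $\rho := \max_{z \in K} |\psi(z)| < 1$.

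Third, I would choose $N \in \N$ large enough that $\rho^N < \varepsilon$ and set $h := \psi^N$. Since $A(\D)$ is closed under multiplication, $h \in A(\D)$. Moreover $h \equiv 1$ on $E$, $|h(z)| \le 1 \le 2$ on $\overline{\D}$, and $|h(z)| \le \rho^N < \varepsilon$ on $K = \overline{\D} \setminus U$, so all three required estimates are verified.

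The only genuinely non-trivial ingredient is Rudin's construction of $\psi$; once it is in hand, the passage from $\psi$ to $h$ is a routine compactness-plus-powers step. An alternative direct approach would be to build $h$ as $\exp(-F)$ where $F \in H^\infty$ has real part equal to the Poisson integral of a non-negative continuous boundary weight that vanishes on $E$ and exceeds $\log(1/\varepsilon)$ on $\T \setminus U$; however, forcing $h \equiv 1$ (rather than merely $|h| \equiv 1$) on $E$ requires exactly the phase adjustments appearing in Rudin's argument, so this route offers no essential shortcut. I therefore expect the only real obstacle to lie \emph{inside} the invoked peak-function lemma, in producing a $\psi$ which is continuous on $\overline{\D}$ and satisfies the strict inequality $|\psi| < 1$ on $\overline{\D} \setminus E$; everything else above is formal.
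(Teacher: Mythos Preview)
Your argument is correct, but it takes a different route from the paper's. The paper builds $h$ directly: it chooses a non-negative $u \in C^\infty(\mathbb{T}\setminus E)$ that blows up on $E$, is supported in a small neighbourhood $V \Subset U$ of $E$, and has arbitrarily small $L^1$-norm; then sets $h_1 = \exp(-u - i\tilde u)$ and $h = 1 - h_1$. The localization near $E$ and the smallness on $\overline{\mathbb{D}}\setminus U$ come from the Herglotz representation together with $\supp u \subset \overline{V}$ and $\|u\|_{L^1}$ small. By contrast, you first invoke a global peak function $\psi$ for $E$ (with no reference to $U$) and then force the smallness on $\overline{\mathbb{D}}\setminus U$ by taking a high power $\psi^N$, using compactness to get a uniform bound $\rho<1$ there.

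What each approach buys: yours is shorter and more modular, and even yields the slightly better bound $|h|\le 1$. The paper's construction, however, is self-contained (it essentially \emph{is} a proof of the peak-set lemma you quote) and, more importantly, it adapts to the Dirichlet-space version in Section~\ref{sec4}: there one needs $\int_{\mathbb{D}}|h'|^2\,dA$ to be small (Lemma~\ref{dirichlet-rudin}), and this comes for free from $\|u\|_{L^1}$ small, whereas your power trick $h=\psi^N$ would typically inflate the Dirichlet integral as $N\to\infty$. A minor quibble: what you call ``Rudin's peak-function lemma'' is not quite the statement of Lemma~1 in \cite{rudin}; the fact that closed measure-zero subsets of $\mathbb{T}$ are peak sets for $A(\mathbb{D})$ is a standard consequence of the Rudin--Carleson circle of ideas, so the citation is morally right but could be sharpened.
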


\begin{proof}[Proof of Lemma \ref{fundlemma}]
 Let $\varepsilon >0$. Also let $V \supset E$ be an open set compactly contained in $U$ and $\delta >0$.
Since $E$ has measure 0, we can construct a real-valued function $u(\zeta) \ge 0$  in $C^\infty(\mathbb{T} \setminus E)$
which tends to $+\infty$ as $\zeta \to w$ for every $w \in E$,  is identically 0 on $\mathbb{T} \setminus \overline{V}$ and satisfies
\begin{equation}
\label{eq:l1norm-u}
0  \, < \, u(0) \, = \, \frac{1}{2\pi} \int_{\mathbb{T}} u(\zeta) |d\zeta| \, < \, \delta,
\end{equation}
where we continue to use the symbol $u$ for the Poisson extension to the unit disc. Let $\tilde u$ be the harmonic conjugate of $u$, normalized so that $\tilde u(0) = 0$.
 Note that the harmonic conjugate $\tilde u \in C^\infty(\overline{\mathbb{D}} \setminus E)$ since the Hilbert transform preserves smoothness, see \cite{garnett, garnett-marshall}.

We now consider the function $h_1 = \exp(-u-i\tilde u).$
First of all, since $u, \tilde u \in C^\infty(\overline{\mathbb{D}} \setminus E)$, the function $h_1$ is holomorphic in $\D$ and extends continuously to $\mathbb{T} \setminus E$. However, on $E$, $h_1$ extends continuously to $0$ (since $u$ tends to infinity).
Thus, $h_1$ extends continuously to the unit circle.
Since $u$ is non-negative, $|h_1| \le 1$.
Finally, from the Herglotz representation
$$
u(z) + i\tilde u(z) = \frac{1}{2\pi} \int_{\mathbb{T}} u(\zeta) \cdot \frac{\zeta+z}{\zeta-z} \, |d\zeta|,
$$
it follows that   $u + i \tilde u $ is very small on $\overline{\mathbb{D}} \setminus U$:
\begin{align*}
|u(z)+i\tilde u(z)| & \le \frac{1}{2\pi} \int_{\overline{V}\cap \T} u(\zeta) \cdot \biggl | \frac{\zeta+z}{\zeta-z} \biggr | \, |d\zeta| \\
& \le
\frac{1}{\pi \cdot \dist(\overline{\mathbb{D}} \setminus U, \, \overline{V} \cap \T)} \int_{\overline{V}\cap \T} u(\zeta) |d\zeta| \\
& \le
\frac{2\delta}{ \dist(\overline{\mathbb{D} }\setminus U, \, \overline{V} \cap \T)}.
\end{align*}
  Thus, by requiring $\delta > 0$ in (\ref{eq:l1norm-u}) to be sufficiently small, we can make  $|h_1 - 1| < \varepsilon$ on $\overline{\mathbb{D}} \setminus U$.

Since $h := 1 - h_1$ has the required properties, the proof of Lemma \ref{fundlemma} is complete.
\end{proof}

To finish the proof of the theorem, we apply Lemma \ref{fundlemma} with the closed sets $E_j$,  open neighbourhoods $U_{j,n}=\{z \in \mathbb{C}: \dist(z, E_j) < 1/n\}$ and $\varepsilon = 1/n$.  We call $h_{j,n}$ the resulting Rudin functions and define $\Phi_n$ by the formula (\ref{eq:phin-def}). Note that when $n \ge 1$ is large, the open neighbourhoods $U_{j,n}$ are disjoint.  It is easy to see that the  functions $\Phi_n$  are uniformly bounded by $\exp \bigl (2m \cdot \max \{|v_j|:j=1,\dots,m\} \bigr )$ on $\overline{\mathbb{D}}$ and have all the desired properties. Thus, the proof of Theorem \ref{simul-approx} is complete.
\end{proof}

\section{Universality in Dirichlet-type spaces}\label{sec4}
Recently, M\"uller \cite{muller2} showed that for each closed set $E \subset \T$ of (logarithmic) capacity zero, there is a function in the Dirichlet space $\mathcal D$ that has a universal Taylor series on $E$. It is natural to consider the analogous question for the case of optimal polynomial approximants.

We note that the Dirichlet space $\mathcal D$ consists of all holomorphic functions $f$ on $\mathbb{D}$ satisfying
$$\mathcal{D}(f):=\dfrac{1}{\pi}\int _{\mathbb{D}} |f'|^2 dA <\infty ,$$
where $A$ denotes the area measure in $\mathbb{C}$. More precisely, for each function $f$ in $\mathcal D$  we have $\|f \| _{\mathcal{D}}^2= \mathcal{D}(f)+ \|f\|_{H^2}^2.$
Let $\mathcal D_{nv} \subset \mathcal D$ denote the (metric)  subspace of $\mathcal D$ formed by all the nowhere vanishing functions and the identically zero function. By Hurwitz theorem, it is a closed subset of the Dirichlet space (and hence it is a complete metric space, so that the Baire category theorem applies). For a subset $E$ of $\T$, we also denote $$
\mathcal{U}^{\mathcal{D}_{nv}}_{E} = \bigl \{
 f\in \mathcal D_{nv}\setminus \{0\}: \forall g\in C(E) \, \exists (m_s) : Q_{m_s}(1/f)\to g \mbox{ in } C(E) \bigr \}.
 \color{black}
$$

In this section, we prove the following theorem:

\begin{theorem}\label{universal dirichlet}
If $E \subset \mathbb{T}$ is a closed set of capacity $0$, then $\mathcal{U}^{\mathcal D_{nv}}_{E}$ is a $G_\delta$ dense subset of $\mathcal D_{nv}$.
\end{theorem}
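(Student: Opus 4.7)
The plan is to mirror the proof of Theorem \ref{universal opa}, working now in the complete metric space $\mathcal{D}_{nv}$ (which, as noted in the text, is closed in $\mathcal{D}$ by Hurwitz's theorem, hence complete). Because every function in $\mathcal{D}_{nv}$ is already zero-free on $\mathbb{D}$, the inner--outer factorization trick used via Proposition \ref{invinner} is unnecessary here, and the Baire-category scheme becomes somewhat more direct.

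First I would fix a countable dense collection $\{P_n\}_{n\in\mathbb{N}}$ of polynomials in $C(E)$ which are zero-free on $E$, and introduce the sets
$$
E_{k,n,m}^{\mathcal{D}} \, = \, \bigl\{ f \in \mathcal{D}_{nv}\setminus\{0\} :\, \|Q_m(1/f)-P_n\|_{C(E)} < 1/k \bigr\}.
$$
The argument of Proposition \ref{keyproposition}(a) then expresses $\mathcal{U}_{E}^{\mathcal{D}_{nv}}$ as $\bigcap_{k,n}\bigcup_{m} E_{k,n,m}^{\mathcal{D}}$. Each $E_{k,n,m}^{\mathcal{D}}$ is open in $\mathcal{D}_{nv}$ by the direct analogue of Proposition \ref{innercontinuity}: the Taylor coefficients of $Q_m(1/f)$ are the unique solution of a linear system whose Gram matrix with entries $\langle z^j f, z^k f\rangle_{\mathcal{D}}$ and right-hand side both vary continuously with $f\in\mathcal{D}\setminus\{0\}$, and the matrix is invertible because $\{f, zf, \dots, z^m f\}$ are linearly independent whenever $f\not\equiv 0$. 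Hence $\mathcal{Q}_m:\mathcal{D}\setminus\{0\} \to C(K)$ is continuous for any compact $K\subset\mathbb{C}$, and $E_{k,n,m}^{\mathcal{D}}$ is the preimage of an open ball.

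The core step is density of $\bigcup_m E_{k,n,m}^{\mathcal{D}}$ in $\mathcal{D}_{nv}$. Given $f\in\mathcal{D}_{nv}$, $g:=P_n$, and $\varepsilon>0$, I would apply Theorem \ref{simul-approxD} (the Dirichlet-space analogue of Theorem \ref{simul-approx}, which is valid precisely because $E$ has capacity zero) to the pair $(f, 1/g)$ to produce a polynomial $P$ zero-free on $\overline{\mathbb{D}}$ satisfying
$$
\|P-f\|_{\mathcal{D}} < \varepsilon \qquad \text{and} \qquad \|P-1/g\|_{C(E)} < \delta,
$$
where $\delta = \delta(g) > 0$ is chosen so small that $\|1/P - g\|_{C(E)}<\varepsilon/2$. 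Setting $F:=P \in \mathcal{D}_{nv}\setminus\{0\}$, the classical convergence result $Q_m(1/P)\to 1/P$ uniformly on $\overline{\mathbb{D}}$ for polynomials $P$ zero-free on $\overline{\mathbb{D}}$ (see \cite{opa1}, which adapts to the $\mathcal{D}$-setting since $\mathcal{D}$-norm convergence plus boundedness of point evaluations on compacta of $\overline{\mathbb{D}}\setminus \{\text{zeros of }P\} = \overline{\mathbb{D}}$ yields uniform convergence on $\mathbb{T}$) gives $\|Q_m(1/F)-1/P\|_{C(E)}<\varepsilon/2$ for $m$ large, and the triangle inequality then yields $\|Q_m(1/F) - g\|_{C(E)} < \varepsilon$. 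Applying Baire's theorem in the complete metric space $\mathcal{D}_{nv}$ concludes that $\mathcal{U}_{E}^{\mathcal{D}_{nv}}$ is a dense $G_\delta$.

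The main obstacle is really hidden in Theorem \ref{simul-approxD}: the Baire bookkeeping is a routine transcription of Section \ref{sec:existence-opa}, but the Rudin-type construction in Lemma \ref{fundlemma} must be redone so that the resulting functions $h_{j,n}$ belong to $\mathcal{D}$, i.e.\ the Herglotz-type exponentials $\exp(-u-i\tilde u)$ are Dirichlet-class rather than just $H^\infty$. Requiring $u+i\tilde u \in \mathcal{D}$ forces a stronger control on the support of $u$, and this is precisely what promotes the measure-zero hypothesis to the capacity-zero hypothesis in this section. Once Theorem \ref{simul-approxD} is in hand, the argument above closes out Theorem \ref{universal dirichlet}.
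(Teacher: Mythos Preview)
Your proposal is correct and follows essentially the same route as the paper: the paper likewise reduces Theorem \ref{universal dirichlet} to an analogue of Proposition \ref{keyproposition} in $\mathcal D_{nv}$, handles parts (a) and (b) exactly as in the Hardy case, and isolates part (c) as Theorem \ref{simul-approxD}, whose proof relies on a Dirichlet-space Rudin-type lemma (Lemma \ref{dirichlet-rudin}) where the capacity-zero hypothesis enters. Your observation that the inner--outer factorization via Proposition \ref{invinner} is unnecessary in $\mathcal D_{nv}$ is correct and is the reason the paper restricts to that subspace in the first place.
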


\begin{remark}
Notice that for $f\in \mathcal{D}\subset H^2$, the optimal polynomial approximants to $1/f$ in $\mathcal{D}$ are not the same polynomials as those in $H^2$ and so the result does not follow from Theorem \ref{universal opa}. In particular, we do not know whether $\mathcal{U}^{\mathcal{D}_{nv}}_{E} \subset \mathcal{U}_{E}$.
\end{remark}

In order to deduce Theorem \ref{universal dirichlet} from the Baire category theorem, we need an analogue of Proposition \ref{keyproposition} for $\mathcal D$, under the stronger assumption
that $E\subset \mathbb{T}$ is a closed set of capacity $0$. To prove analogues of parts (a) and (b), we can argue as in the Hardy case. For part (c), it suffices to establish a simultaneous zero-free approximation result for $\mathcal D \times C(E)$:

\begin{theorem}
\label{simul-approxD}
Let $E \subset \T$ be a closed set of capacity $0.$  Let $g \in \mathcal{D}$ be non-vanishing on $\D$, and let $f \in C(E)$ be non-vanishing on $E$. Then there exists a sequence of polynomials $P_n$ which do not vanish on the closed unit disc $\overline{\mathbb{D}}$ such that $P_n \to g$ in $\mathcal{D}$ and $P_n \to f$ in $C(E)$.
\end{theorem}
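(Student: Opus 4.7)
The plan is to mirror the proof of Theorem \ref{simul-approx}, replacing Lemma \ref{fundlemma} with a Dirichlet-refined version whose construction is precisely where the capacity-zero hypothesis enters.

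I would first reprise the three reductions. For (i), dilates $g(rz)$ converge to $g$ in $\mathcal{D}$ as $r \uparrow 1$ (apply Abel summation to the Taylor coefficients), so one may assume $g$ extends holomorphically and non-vanishingly to a neighbourhood of $\overline{\mathbb{D}}$; in particular $g$ and $g'$ are bounded on $\overline{\mathbb{D}}$. Reduction (ii) is immediate since Taylor partial sums are the orthogonal projections onto the polynomial subspaces in $\mathcal{D}$. Reduction (iii) is unchanged: capacity zero implies Lebesgue measure zero and nowhere density, so $E$ partitions into finitely many closed pieces $E_1,\dots,E_m$ on which $f/g$ is approximately a constant $e^{v_j}$. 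I then set
\[
 \Phi_n(z) \;=\; \exp\Bigl(\,\sum_{j=1}^m v_j\, h_{j,n}(z)\,\Bigr),
\]
for Dirichlet-refined Rudin functions $h_{j,n}$ produced by the key lemma below, and approximate $g\Phi_n$ by polynomials at the end via Taylor truncation of a suitable dilation.

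The required analogue of Lemma \ref{fundlemma} would state: if $E \subset \mathbb{T}$ is closed of capacity zero, $U \supset E$ open, and $\varepsilon > 0$, then there exists $h \in A(\mathbb{D})$ with
\[
 |h| \le 2 \text{ on } \overline{\mathbb{D}},\qquad |h| < \varepsilon \text{ on } \overline{\mathbb{D}} \setminus U,\qquad |h - 1| < \varepsilon \text{ on } E,
\]
together with the new Dirichlet bound $\int_{\mathbb{D}} |h'|^2\, dA < \varepsilon$. I would build $h = 1 - \exp(-u - i\tilde u)$ as in Rudin's proof, the task being to furnish a non-negative function $u$, harmonic on $\mathbb{D}$, with $u \ge \log(1/\varepsilon)$ on $E$, with $u$ and $\tilde u$ uniformly small on $\overline{\mathbb{D}} \setminus U$, and with $\int_{\mathbb{D}} |\nabla u|^2\, dA < \varepsilon$; then $|h'|^2 \le |(u+i\tilde u)'|^2$ since $|h_1| \le 1$ automatically yields the Dirichlet bound on $h$. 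To construct such $u$, I would use the equivalence between $E$ having logarithmic capacity zero and the Dirichlet-space capacity of $E$ vanishing: by outer regularity, pick an open $V$ with $E \subset V \Subset U \cap \mathbb{T}$ of arbitrarily small Dirichlet capacity, and take $u$ to be a constant multiple $M \cdot u_V$ of its equilibrium potential. Then $u \ge M$ on $V \supset E$, $\mathcal{D}(u) = M^2 \operatorname{cap}(V)$ can be made $< \varepsilon$ by taking $\operatorname{cap}(V)$ small enough, and $u, \tilde u$ remain small on $\overline{\mathbb{D}} \setminus U$ via the Herglotz estimate already used in Rudin's lemma once we ensure the boundary trace of $u$ is supported in $V$.

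With the lemma in place, applying it to each pair $(E_j, U_{j,n})$ with $U_{j,n} = \{z : \dist(z, E_j) < 1/n\}$ (disjoint for $n$ large) and $\varepsilon_n \downarrow 0$ produces $h_{j,n}$ and hence $\Phi_n$. The resulting $\Phi_n$ is uniformly bounded, converges to $e^{v_j}$ uniformly on $E_j$ and to $1$ locally uniformly on $\mathbb{D} \cup (\mathbb{T} \setminus E)$, and satisfies $\int_{\mathbb{D}} |\Phi_n'|^2\, dA \le C\sum_j \int_{\mathbb{D}} |h_{j,n}'|^2\, dA \to 0$ using the uniform bound on $|\Phi_n|$. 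Expanding $(g(\Phi_n - 1))' = g'(\Phi_n - 1) + g\Phi_n'$ and using boundedness of $g$, $g'$ together with bounded convergence, both $\|g(\Phi_n - 1)\|_{H^2}^2$ and the two pieces of the area integral vanish in the limit; the crucial piece $\int_\mathbb{D} |g\Phi_n'|^2\, dA \le \|g\|_\infty^2 \int_\mathbb{D} |\Phi_n'|^2\, dA \to 0$ is where the Dirichlet smallness from the new lemma is exploited. Meanwhile $g\Phi_n \to f$ uniformly on $E$ by construction. The main obstacle I anticipate is the Dirichlet-refined Rudin lemma: making $u$ large on $E$ while keeping both the Dirichlet integral small \emph{and} the boundary trace concentrated near $E$ is exactly the point where Rudin's original $L^1$ argument must be augmented, and it is precisely this combination that fails for sets of mere Lebesgue measure zero, forcing the stronger hypothesis of capacity zero.
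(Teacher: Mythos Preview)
Your proposal is correct and follows essentially the same route as the paper: reduce as in the Hardy case, set $\Phi_n=\exp\bigl(\sum_j v_j h_{j,n}\bigr)$, and upgrade Lemma~\ref{fundlemma} to a Dirichlet version requiring $\int_{\mathbb D}|h'|^2\,dA<\varepsilon$; the paper states exactly this as Lemma~\ref{dirichlet-rudin} and likewise takes $h=1-e^{-\phi}$ with $\re\phi\ge 0$ and $\|\phi\|_{\mathcal D}$ small. The only noteworthy difference is in manufacturing $\phi$: the paper (Lemma~\ref{primer-thm}, following \cite{EFKMR}) takes an infinite sum $\sum_n c(E_n)\,f_{\mu_n}$ of weighted logarithmic potentials of shrinking neighbourhoods, which yields $\re\phi\to+\infty$ on $E$ and hence $h=1$ exactly there, whereas you propose a single scaled equilibrium potential $M\cdot u_V$ giving only $|h-1|<\varepsilon$ on $E$; either version suffices, and the paper's choice sidesteps having to check simultaneously that the Dirichlet extremal is non-negative, continuous up to $\overline{\mathbb D}\setminus E$, and has boundary representing measure supported in $\overline V$.
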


For this purpose, we construct an analogue of the Rudin functions tailored for the Dirichlet space:

\begin{lemma}
\label{dirichlet-rudin}
Let $E \subset \mathbb{T}$ be a closed set of capacity $0$ and $U$ be an open neighbourhood of $E$.
 For any $\varepsilon > 0$, there exists a function $h \in A(\mathbb{D})$ which satisfies
$$
|h(z)| \le 2, \ z \in \overline{\mathbb{D}}, \qquad
|h(z)| < \varepsilon, \ z \in \overline{\mathbb{D}} \setminus U,$$
$$
h(z) =1, \ z \in E, \qquad \int_{\mathbb{D}} |h'|^2 dA \le \varepsilon.
$$
\end{lemma}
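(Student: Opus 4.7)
My plan is to follow the strategy of Lemma \ref{fundlemma} and again take $h = 1 - \exp(-F)$ with $F = u + i\tilde u$ the holomorphic completion of a non-negative boundary function $u_0$ which blows up on $E$. The novelty is that $u_0$ must have not only small $L^1$-norm (to keep $|h|$ small on $\overline{\mathbb{D}} \setminus U$, exactly as in the original argument) but also small $H^{1/2}(\T)$-norm, which is precisely what is needed to make the Dirichlet integral of $h$ small. The key ingredient enabling this refinement is the classical equivalence, for closed subsets of $\T$, between logarithmic capacity and $H^{1/2}$-capacity: since $E$ has logarithmic capacity zero, for any open neighbourhood $W \supset E$ and any $\eta > 0$ there exists $\psi \in C^\infty(\T)$ with $0 \le \psi \le 1$, $\psi \equiv 1$ on an open neighbourhood of $E$, $\supp(\psi) \subset W$, and $\|\psi\|_{H^{1/2}(\T)}^2 < \eta$.

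First I would pick a decreasing sequence of open neighbourhoods $V_n \supset E$ with $V_n \subset U$, $|V_n| < 2^{-n}\delta/n$, and $\bigcap_n V_n = E$, and for each $n$ use the characterization above to choose a smooth cutoff $\psi_n \in C^\infty(\T)$ with $\psi_n \equiv 1$ on an open neighbourhood of $E$, $\supp(\psi_n) \subset V_n$, and $\|\psi_n\|_{H^{1/2}}^2 < 4^{-n} \varepsilon^2 / n^2$. Setting $u_0 := \sum_n n \psi_n$, the series is locally finite on $\T \setminus E$ (since the $V_n$ shrink to $E$), so $u_0$ is smooth there, identically $+\infty$ on $E$, supported in $U$, and satisfies both $\|u_0\|_{L^1(\T)} < \delta$ and $\|u_0\|_{H^{1/2}}^2 < \varepsilon^2$ by the triangle inequality (after the obvious comparisons).

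Next, let $u$ denote the Poisson extension of $u_0$ and $\tilde u$ its harmonic conjugate normalised at $0$, and set $F = u + i\tilde u$, $h_1 = e^{-F}$, $h = 1 - h_1$. Since $u_0 \ge 0$, we have $u \ge 0$ and $|h_1| \le 1$, so $|h| \le 2$. Smoothness of $u_0$ off $E$ and the smoothness-preserving property of the Hilbert transform give $h_1 \in C^\infty(\overline{\mathbb{D}} \setminus E)$, while termwise application of Fatou's lemma to $u(z) = \sum_n n\, P[\psi_n](z)$ yields $\liminf_{z \to w} u(z) \ge \sum_n n = +\infty$ for every $w \in E$, so that $h_1 \to 0$ at each point of $E$; hence $h \in A(\mathbb{D})$ with $h \equiv 1$ on $E$. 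The bound $|h| < \varepsilon$ on $\overline{\mathbb{D}} \setminus U$ follows exactly as in Lemma \ref{fundlemma} from the Herglotz estimate for $F$, combined with $\|u_0\|_{L^1} < \delta$ for $\delta$ small enough in terms of $\dist(\overline{\mathbb{D}} \setminus U,\, V)$. Finally, since $h' = F' e^{-F}$ and $u \ge 0$,
\[
\int_\mathbb{D} |h'|^2 \, dA \,=\, \int_\mathbb{D} |F'|^2 e^{-2u}\, dA \,\le\, \int_\mathbb{D} |F'|^2 \, dA \,=\, \int_\mathbb{D} |\nabla u|^2 \, dA \,=\, \pi \|u_0\|_{H^{1/2}(\T)}^2 \,<\, \pi \varepsilon^2,
\]
using $|F'|^2 = |\nabla u|^2$ for a holomorphic $F$ with real part $u$ and the Parseval formula for the Dirichlet integral of a Poisson extension; rescaling $\varepsilon$ at the outset yields the claimed inequality.

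The main obstacle is the construction of the cutoffs $\psi_n$ in the first step, and specifically the simultaneous control of support inside $V_n$ \emph{and} arbitrarily small $H^{1/2}$-norm. This is exactly where the capacity-zero hypothesis is used, via the classical equivalence of logarithmic and $H^{1/2}$-capacity for closed subsets of $\T$: that equivalence supplies test functions of arbitrarily small $H^{1/2}$-norm that are $\ge 1$ near $E$, and multiplying such a test function by a smooth bump supported in $V_n$ and equal to $1$ near $E$ (an operation which is bounded on $H^{1/2}(\T)$) enforces the support constraint without destroying the norm bound.
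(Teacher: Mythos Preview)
Your proposal is correct and shares the paper's architecture: both take $h = 1 - e^{-f}$ for a holomorphic $f$ with $\re f \ge 0$, $\re f \to +\infty$ on $E$, $|f|$ small on $\overline{\mathbb{D}}\setminus U$, and small Dirichlet integral, and then read off all four conclusions exactly as you do (in particular via $|h'| = |f'|\,e^{-\re f} \le |f'|$). The difference lies in how the auxiliary $f$ is produced. The paper isolates this step as a separate lemma and works with logarithmic potentials: it takes equilibrium measures $\mu_n$ of shrinking closed neighbourhoods $E_n \supset E$, sets $f = \sum_n c(E_n)\, f_{\mu_n}$ with $f_{\mu_n}(z) = \int_\T \log\frac{2}{1-\bar\zeta z}\,d\mu_n(\zeta)$, and quotes the energy estimates from \cite{EFKMR} to make $\|f\|_{\mathcal D}$ arbitrarily small. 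You instead translate the hypothesis into the equivalent statement that $E$ has $H^{1/2}$-capacity zero, extract smooth test functions $\psi_n$ of tiny $H^{1/2}$-norm, assemble $u_0 = \sum_n n\,\psi_n$ on $\T$, and take the holomorphic completion. Your route is self-contained at the level of Sobolev estimates and Parseval, whereas the paper imports a ready-made potential-theoretic construction; in return, the paper avoids the side argument about Lipschitz composition and smooth multipliers on $H^{1/2}(\T)$ that you need to pin down the $\psi_n$ with the correct support and range. Both arguments are standard and of comparable length. Two cosmetic points: the constant in your final Parseval identity is $2\pi$ rather than $\pi$, and you should arrange $\overline{V_1}\subset U$ and $\overline{V_{n+1}}\subset V_n$ so that the Herglotz estimate sees a positive distance and the series for $u_0$ is genuinely locally finite off $E$.
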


To prove the above lemma, we adjust the construction in Theorem 3.4.1 of \cite{EFKMR}:
\begin{lemma}
\label{primer-thm}
Let $E$ be a closed subset of $\mathbb{T}$ of capacity $0$ and $U$ be an open neighbourhood of $E$. There exists
$f \in \mathcal D$ with $\re f(z) \ge 0$ such that $\lim_{z \to \zeta} \re f(z) = +\infty$ for all $\zeta \in E$.
We can choose $f$ to be continuous on $\overline{\mathbb{D}} \setminus E$. Additionally, for any $\delta > 0$, we can choose $f$ so that $\| f \|_{\mathcal D} < \delta$ and
$|f| < \delta$ on $\overline{\mathbb{D}} \setminus U$.
\end{lemma}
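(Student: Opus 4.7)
The plan is to adapt the construction of Theorem~3.4.1 in [EFKMR], namely the analytic completion of a logarithmic potential, with two refinements: I force the supporting measure to lie inside $U$, and I scale the resulting function by a small factor to satisfy the $\delta$-bounds. The cornerstone is the fact that for a closed set $E\subset\T$ of logarithmic capacity zero, one can construct a positive Borel measure $\mu$, supported in any prescribed neighbourhood of $E$, with finite logarithmic energy $\mathcal{E}(\mu):=\iint\log\tfrac{1}{|\zeta-w|}\,d\mu(\zeta)\,d\mu(w)<\infty$, whose logarithmic potential $U^\mu(\zeta):=\int\log\tfrac{1}{|\zeta-w|}\,d\mu(w)$ tends to $+\infty$ at \emph{every} $\zeta\in E$.

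Concretely, I would fix a decreasing sequence of open neighbourhoods $V_k$ of $E$ with $\overline{V_k}\subset U$, choose compacta $K_k\subset V_k\cap\T$ containing $E$ with capacities $\mathrm{cap}(K_k)$ decreasing fast enough, let $\omega_k$ be the equilibrium measure of $K_k$, and form $\mu=\sum_k c_k\,\omega_k$ with weights $c_k>0$ chosen so that $\mathcal{E}(\mu)$ converges (Cauchy--Schwarz controls the mixed energies $\int U^{\omega_j}d\omega_k$) while $\sum_k c_k\bigl(-\log\mathrm{cap}(K_k)\bigr)=+\infty$, the latter forcing $U^\mu\equiv+\infty$ on $E$ via Frostman's theorem. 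The preliminary function
\[
f_0(z):=\int_\T\log\frac{1}{1-\bar\zeta z}\,d\mu(\zeta),\qquad z\in\mathbb{D},
\]
(principal branch) is then analytic in $\mathbb{D}$ with $\re f_0\ge0$, and its Taylor coefficients $\hat f_0(n)=\tfrac{1}{n}\,\hat\mu(n)$ for $n\ge 1$ give $\mathcal{D}(f_0)=\sum_{n\ge1}|\hat\mu(n)|^2/n$, which by Parseval is a positive multiple of $\mathcal{E}(\mu)$ and hence finite. Fatou's lemma applied as $z\to\zeta\in E$ produces $\re f_0(z)\to+\infty$, and dominated convergence, after a weak-limit passage arranging $\supp\mu\subset E$, gives continuity of $f_0$ on $\overline{\mathbb{D}}\setminus E$.

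The prescribed $\delta$-bounds are obtained by a final rescaling. Since $\overline{\mathbb{D}}\setminus U$ is compact and avoids $\supp\mu\subset E$, the quantity $M:=\max\bigl\{\|f_0\|_\mathcal{D},\,\sup_{\overline{\mathbb{D}}\setminus U}|f_0|\bigr\}$ is finite, so $f:=(\delta/M)\,f_0$ meets all four requirements: positivity and divergence of the real part on $E$ are preserved by positive scaling, while both the norm bound and the sup bound drop below $\delta$ by linearity. The main obstacle throughout is producing a finite-energy measure whose potential is $+\infty$ at every point of $E$ (rather than merely quasi-everywhere): this Evans-type upgrade is where the careful choice of the nested $K_k$ and the weighted sum structure are essential, and is the heart of the EFKMR construction being adapted.
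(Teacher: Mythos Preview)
Your overall strategy is the paper's: both adapt Theorem~3.4.1 of \cite{EFKMR}, forming $f$ as a weighted sum $\sum_k c_k\,f_{\omega_k}$ of analytic logarithmic potentials of equilibrium measures $\omega_k$ on shrinking closed neighbourhoods $K_k\supset E$ taken inside $U$. The one cosmetic difference is that the paper achieves the $\delta$-bounds by choosing $c_k=c(K_k)$ and making $\sum c(K_k)^{1/2}$ small from the start, whereas you build $f_0$ once and rescale by $\delta/M$; your route is legitimate and arguably cleaner.

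Two points need repair. First, with the kernel $\log\frac{1}{1-\bar\zeta z}$ you do \emph{not} get $\re f_0\ge0$: for $z\in\D$ and $\zeta\in\T$ one has $|1-\bar\zeta z|$ ranging up to $2$, so the integrand can be as negative as $-\log2$. The paper (and \cite{EFKMR}) use $\log\frac{2}{1-\bar\zeta z}$ precisely to force nonnegativity; switching kernels only adds the constant $(\log 2)\,\mu(\T)$, so your rescaling step survives intact. Second, and more seriously, the ``weak-limit passage arranging $\supp\mu\subset E$'' is impossible: a set of logarithmic capacity zero carries \emph{no} nonzero positive measure of finite logarithmic energy, so forcing $\supp\mu\subset E$ would destroy $\mathcal D(f_0)<\infty$. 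For continuity on $\overline{\D}\setminus E$ you should instead arrange $\bigcap_k K_k=E$ and argue, as in \cite{EFKMR}, that near any $\zeta_0\in\T\setminus E$ the tail of the series is continuous because the $\omega_k$ are eventually supported away from $\zeta_0$, while each of the finitely many initial $f_{\omega_k}$ is continuous on $\overline{\D}$ (choose the $K_k$ to be finite unions of closed arcs, hence regular for the Dirichlet problem). For the sup-bound on $\overline{\D}\setminus U$, replace the incorrect ``$\supp\mu\subset E$'' by the containment $\supp\mu\subset\overline{V_1}\subset U$ that your construction already gives.
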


\begin{proof}[Proof of Lemma \ref{primer-thm}]
Choose a decreasing sequence of closed neighbourhoods $E_n$ of $E$ in $\mathbb{T}$
such that the sum
\begin{equation}
\label{eq:root-sum}
\sum_{n=1}^\infty c(E_n)^{1/2} < \infty ,
\end{equation}
where $c(\cdot)$ denotes the (logarithmic) capacity.
For each $n \ge 1$, let $\mu_n$ be the equilibrium measure for $E_n$. Following the notation from \cite{EFKMR}, set 
$$f_{\mu_n}(z)=\int_{\mathbb{T}} \log \left( \frac{2}{1-z\overline{\zeta}}\right)  \, d\mu_{n}(\zeta)$$
and
 $$f(z) = \sum_{n = 1}^\infty c(E_n) f_{\mu_n}(z) = \sum_{n = 1}^\infty  \int_{\mathbb{T}} c(E_n)\log \left( \frac{2}{1-z\overline{\zeta}}\right)  \, d\mu_{n}(\zeta).$$

As in the proof of Theorem 3.4.1 of \cite{EFKMR}, the sum defining $f$ converges uniformly on compact subsets of the unit disc and $\| f \|_{\mathcal D}$ is finite.

 Let $V$ be an open set compactly contained in $U$. In order for $f$ to satisfy the additional properties in the statement of the lemma, we simply choose the closed neighbourhoods $E_n \subset V$ so that $\sum_{n=1}^\infty c(E_n)^{1/2} $ is as small as we wish.
The function $f$ is small on $\mathbb{D} \setminus U$ since the $\mu_n$ are probability measures: 
$$
|f(z)| = \sum _{n=1}^{\infty} c(E_n) |f_{\mu_n}(z)| \le \sum _{n=1} ^{\infty} c(E_n)
 \int_{\mathbb{T}} \biggl | \log  \left( \frac{2}{1-z\overline{\zeta}}\right)  \biggr | d\mu_n(\zeta).
$$
As in \cite{EFKMR}, $\| f \|_{\mathcal D}$ is arbitrarily small because it is controlled by the sum (\ref{eq:root-sum}). Moreover, arguing as in Theorem 3.4.1 of \cite{EFKMR}, we can choose $f$ to be continuous on $\overline{\mathbb{D}} \setminus E$.
\end{proof}

\begin{proof}[Proof of Lemma \ref{dirichlet-rudin}] By Lemma \ref{primer-thm}, the function defined by $h = 1 - e^{-f}$ on $\overline{\mathbb{D}}\setminus E$  and $h=1$ on $E$ belongs to $A(\mathbb{D})$ and satisfies $\| h \|_ {H^\infty} \le 2$.
By construction, the function $h$ will be close to $0$ on $\overline{\mathbb{D}} \setminus U$. Since
$|h'| = |f'| e^{-\re f}$, we conclude that $h \in \mathcal D$ with
$$
\int_{\mathbb{D}} |h'|^2 dA \le \int_{\mathbb{D}} |f'|^2 dA .
$$
The proof is complete.
\end{proof}

We are now ready to prove the result on simultaneous zero-free approximation in $\mathcal D \times C(E)$, where
$E \subset \mathbb{T}$ has zero capacity. 
\begin{proof}[Proof of Theorem \ref{simul-approxD}] We use the same construction as in the Hardy case:
$$
g_n = g \Phi_n = g \prod_{j=1}^m \Psi_{j,n} = g \exp \biggl (\sum_{j=1}^m v_j h_{j,n} \biggr),
$$
 except we construct the functions $h_{j,n}$ using Lemma \ref{dirichlet-rudin}, since we want
$$
\int_{\mathbb{D}} |h_{j,n}'|^2 dA 
$$
to be arbitrarily small. 

By construction, the functions $\Phi _n$ are uniformly bounded on $\overline{\mathbb{D}}$ and $\Phi_n \to 1$ uniformly on compact subsets of $\overline{\mathbb{D}}\setminus E$.
Differentiating and using the fact that $|a+b|^2\leq 2(|a|^2+|b|^2)$ shows that there exists a positive constant $C$, which depends only on $m$, such that
$$
\int_{\mathbb{D}} |g \Phi_{n}'|^2 dA  \leq C   \sum_{i=1}^m \int_{\mathbb{D}} |\Psi'_{i,n}|^2 \biggl ( |g| \prod_{ \substack{j=1\\ j \ne i}}^m |\Psi_{j,n}| \biggr )^2 dA
.$$
Since the right-hand side of the above inequality can become arbitrarily small and $|(g_n-g)'|^2\leq 2 (|g'(\Phi _{n}-1)|^2+|g \Phi_{n}'|^2)$,
we conclude that $\|g_n-g\|_{\mathcal D} \to 0$ as $n\to\infty$.
\end{proof}
\begin{remark}
It seems likely that the same strategy can be implemented for the generalized Dirichlet space $\mathcal D_{\alpha}$ ($0 < \alpha < 1$) if one replaces
logarithmic capacity with the Riesz capacity of parameter $(1-\alpha)$.
\end{remark}

\section{Concluding remarks and further directions}\label{conclusion}

We conclude with some remarks and open problems:

\begin{enumerate}
\item
As discussed in the introduction, Beise and M\"uller showed that there exist functions in the Hardy space $H^p$, $1< p<\infty$, with universal Taylor series on closed sets $E\subset\mathbb{T}$ of measure $0$. They observed that the hypothesis that $E$ has zero measure is sharp, since by the Carleson-Hunt theorem, the Taylor series of a function in $H^p$ converges to its boundary values a.e.~on $\mathbb{T}$.
In our setting of optimal polynomial approximants, this assumption also cannot be dispensed with: if $E\subset\mathbb{T}$ has positive measure, then $\mathcal{U}_{E}=\emptyset$. To see this,  assume for the sake of contradiction that $\mathcal{U}_{E}\neq\emptyset$. By the remark in Section \ref{sec:existence-opa}, we would be able to find a cyclic function $f \in \mathcal{U}_{E}$.
 Let $\ell \in \mathbb{C}$ be an arbitrary complex number and $(q_k)$ be a subsequence of o.p.a.~of $1/f$ which converges to $\ell$ uniformly on $E$.
 Since $f$ is cyclic, $\|q_kf-1\|_{L^2(\mathbb{T})}=\|q_kf-1\|_{H^2} \to 0$
 (where we continue to denote by $f$ its boundary values on $\mathbb{T}$). This implies that some subsequence of $(q_k)$ tends to $1/f$  a.e.~on $\mathbb{T}$. Thus $1/f=\ell$ a.e. on $E$, which yields a contradiction because $E$ has positive measure and $\ell$ is arbitrary.
\item In \cite{beise-muller}, Beise and M\"uller showed that there exists a function in the Bergman space $A^2$ that has a universal Taylor series on a subset of the circle of positive measure and containing no closed subset of positive measure and finite entropy. \color{black}  It would be interesting to extend our results for optimal polynomial approximants to this setting.  However, our proof of the simultaneous zero-free approximation does not carry over.
\item It might be interesting to examine whether there exist functions with universal o.p.a.~on sets that are not necessarily contained in the unit circle.  Results in this direction for the Taylor case have been obtained in \cite{CJM}.
\end{enumerate}

\bigskip

\noindent\textbf{Acknowledgements.} Myrto Manolaki thanks the Department of Mathematics and Statistics at the University of South Florida for support during work on this project. Daniel Seco acknowledges financial support from the Spanish Ministry of Economy and Competitiveness through the ``Severo Ochoa Programme for Centers of Excellence in R\&D'' (SEV-2015-0554) and through the grant MTM2016-77710-P.

\end{document}